\newtheorem{theorem}{Theorem}[section]
\newtheorem{lemma}[theorem]{Lemma}
\newtheorem{proposition}[theorem]{Proposition}
\newtheorem{question}[theorem]{Open Question}
\theoremstyle{definition}  
\newtheorem{definition}[theorem]{Definition}
\newtheorem{notation}[theorem]{Notation}
\theoremstyle{remark}  
\newtheorem*{remark}{Remark}
\title[]{Combinatorial Methods in Bootstrap Percolation Problems}
\author[G Nagy]{G\'abor V. Nagy}\address{Bolyai Institute, University of Szeged and John von Neumann University, Kecskemét}
\thanks{The research leading to these results has received funding from the national project TKP2021-NVA-09.
Project no.\ TKP2021-NVA-09 has been implemented with the support provided by the Ministry of Culture and Innovation of Hungary from the National Research,
Development and Innovation Fund, financed under the TKP2021-NVA funding scheme.}
\email{ngaba@math.u-szeged.hu}
\author[Á Süli]{Ákos Süli}\address{Bolyai Institute, University of Szeged}
\email{suliakos@yahoo.com}
\begin{document}

\begin{abstract}
An elegant bootstrap percolation result on the hyperrectangle graph was proved by Balogh, Bollobás, Morris, and Riordan using a linear algebric method in 2012.
This paper studies the same problem by purely combinatorial means. The base case is solved for all dimensions, and we pose open problems for the remaining cases.
\end{abstract}

\maketitle

%

\section{Introduction}

We examine the following process. We are given a hypergraph $\mathcal{H}$ and an initial infected vertex set $A_0 \subseteq V(\mathcal{H})$. For all $i \geq 1$, $A_i$ is defined as follows.
The vertex set $A_i$ is the union of $A_{i-1}$ and the set of vertices $v$ of $\mathcal{H}$ for which there exists an hyperedge $E$ in $\mathcal{H}$ for which $E \setminus A_{i-1} = \{v\}$ holds \cite{Balogh2012}.
We call such a hyperedge an \emph{infecting set of $v$} in $A_{i-1}$, and we also say that $E$ \emph{infects} $v$.

Notice that $(A_i)$ is a monotone increasing set sequence. We only interested in this process when $V(\mathcal{H})$ is finite. In this case, there will be a smallest index $f$ for which $A_f = A_{f+1}$.
The set $A_f$ is the \emph{full form} of $A_0$ which we denote by $\overline{A_0}$. If $\overline{A_0}=V(\mathcal{H})$, we say that $A_0$ \emph{percolates} in $\mathcal{H}$. We also call $A_0$ a \emph{percolating set} in $\mathcal{H}$.

We call the process described above \emph{percolation by phases}. Another, similar process can be defined. Again, we are given a finite hypergraph $\mathcal{H}$ and an initial infected vertex set $A^{(0)} \subseteq V(\mathcal{H})$.
For all $i \geq 1$, $A^{(i)}$ is defined as follows. Pick a vertex $v$ in $V(\mathcal{H}) \setminus A^{(i-1)}$ for which an infecting set exists in $A^{(i-1)}$ (as defined above), if such a vertex exists.
Then, we define $A^{(i)}$ as $A^{(i-1)} \cup \{v\}$. This process is called \emph{step-by-step percolation}. At some point during the iteration of this process starting from a set $A^{(0)}$, there will be an index $\ell$ such that no vertex has an infecting edge in $A^{(\ell)}$.
Then, the process terminates. Unlike percolation by phases, this process is generally not deterministic as vertex $v$ above is usually not unique.
The set sequence $A^{(i)}$, too, is monotone increasing. It is straightforward to prove that starting from any initial infected vertex set $A^{(0)}$, the terminating set $A^{(\ell)}$ is equal to $\overline{A^{(0)}}$ regardless of the vertices chosen in step-by-step percolation. We make heavy use of this fact in our analysis.

We restrict our attention to a special class of hypergraphs. These hypergraphs can be defined in the following way. 

\begin{notation}
	We denote the set $\{1,2,\ldots, n\}$ with $[n]$.
\end{notation}

\begin{definition}
	Fix positive integers $d, t, r$ and $n_1, n_2, \ldots, n_d$ such that $d \geq r \geq 1$ and $t \geq 2$. The \emph{hyperrectangle graph} corresponding to these parameters has the vertex set $[n_1] \times [n_2] \times \ldots \times [n_d]$.
	Furthermore, the hyperedges are the Cartesian products $I_1 \times I_2 \times \ldots \times I_d$ such that $I_i \subseteq [n_i]$ for all $i \in [d]$, and among the sets $I_i$, exactly $r$ have size $t$, and the rest have size $1$.
	(Roughly speaking, the hyperedges are the $r$-dimensional subhypercubes with side-length $t$.)

\end{definition}
If the value of $t$ and $r$ are clear from context, then by a slight abuse of notation, we also say that a set $A$ percolates in $[n_1] \times [n_2] \times \ldots \times [n_d]$ if it percolates in a hyperrectangle graph with parameters $d, t, r$ and $n_1, n_2, \ldots, n_d$. Note that the value of $d$ is implied by the vertex set.

Given a hyperrectangle graph and its parameters, we are interested in the minimal size of sets which percolate.
We denote this quantity with $m(\underline{n}, d, t, r)$ where $\underline{n}$ denotes the $d$-tuple $(n_1, n_2, \ldots, n_d)$.
Noga Alon determined  $m(\underline{n}, d, t, r)$ in the case $d=r$  using exterior algebra in 1985 \cite{Alon1985}.
In 2012, Balogh, Bollobás, Morris, and Riordan \cite{Balogh2012} established tight lower (as well as tight upper) bounds for an even more general class of hypergraphs than the one described above using linear algebraic methods. 

\begin{definition}\label{def:L}
	Let the values of the parameters $d, t, r$ and $n_1, n_2,\ldots,n_d$ be fixed. Let $L_{n_1,\ldots, n_d}^{d,t,r}$ denote the subset of elements of $[n_1] \times [n_2] \times \ldots \times [n_d]$ which has at most $r-1$ coordinates with value more than $t-1$.
\end{definition}

The set $L_{5,6}^{2,2,2}$ is visualized as described below on Figure \ref{fig:l_set}. It was showed in \cite{Balogh2012}, that $L_{n_1,\ldots, n_d}^{d,t,r}$ is a minimum-size percolating set in the hyperrectangle graph $[n_1] \times [n_2] \times \ldots \times [n_d]$ with parameters $d,t,r$:

\begin{theorem}[\cite{Balogh2012}, Theorem 4.]\label{theorem:BBMR}
	Let $d, t, r$ and $n_1, n_2, \ldots, n_d$ such that $d \geq r \geq 1$ and $t \geq 2$ be arbitrary positive integers. Then,
	\begin{equation*}
	m((n_1,n_2,\ldots,n_d), d, t, r) = |L_{n_1,\ldots, n_d}^{d,t,r}|=\sum_{s=0}^{r-1} \sum_{I \in \binom{[d]}{s}} (t-1)^{d-s} \prod_{i \in I} (n_i+1-t)
	\end{equation*}
	holds where $\binom{[d]}{s}$ denotes the set of $s$-element subsets of $[d]$.
\end{theorem}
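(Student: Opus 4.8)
The plan is to prove the two inequalities $m \le |L_{n_1,\dots,n_d}^{d,t,r}|$ and $m \ge |L_{n_1,\dots,n_d}^{d,t,r}|$ separately, after first checking that the stated sum really counts $L := L_{n_1,\dots,n_d}^{d,t,r}$. For the count, I would call a coordinate $i$ of a vertex \emph{large} if its value exceeds $t-1$ (there are $n_i+1-t$ such values) and \emph{small} otherwise (there are $t-1$ such values); choosing the set $I$ of large coordinates with $|I|=s\le r-1$ and then a value in each coordinate reproduces the formula $\sum_{s=0}^{r-1}\sum_{I\in\binom{[d]}{s}}(t-1)^{d-s}\prod_{i\in I}(n_i+1-t)$ directly.

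For the upper bound I would show that $L$ percolates by induction on the number of large coordinates. Let $L_k$ be the set of vertices with at most $k$ large coordinates, so $L_{r-1}=L$ and $L_d=V(\mathcal H)$. Assuming every vertex of $L_{k-1}$ is already infected (with $k\ge r$), take any $v$ with exactly $k$ large coordinates and pick an $r$-subset $J$ of its large coordinates. Define the hyperedge $E=\prod_i I_i$ by $I_j=\{1,2,\dots,t-1,v_j\}$ for $j\in J$ (a $t$-set, since $v_j\ge t$) and $I_i=\{v_i\}$ otherwise. Any $u\in E$ with $u\ne v$ agrees with $v$ outside $J$ and has at least one small entry inside $J$, so $u$ has at most $(k-r)+(r-1)=k-1$ large coordinates; hence $u\in L_{k-1}$ is infected and $E$ infects $v$. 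This gives $L_k\subseteq\overline{L}$, and the induction yields $\overline L=V(\mathcal H)$.

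For the lower bound I would use a span/rank argument. Let $S=\{\alpha : 0\le\alpha_i\le n_i-1,\ \#\{i:\alpha_i\ge t-1\}\le r-1\}$; a count identical to the one above gives $|S|=|L|$. Assign to each vertex $x$ the moment vector $w_x=(x^\alpha)_{\alpha\in S}\in\mathbb R^{S}$. Two facts are needed. First (full span), the monomials $\{X^\alpha:\alpha\in S\}$ are a subfamily of the tensor basis $\{X^\alpha:\alpha_i\le n_i-1\}$, which is linearly independent as functions on the grid, so the $w_x$ span all of $\mathbb R^{S}$. Second (span invariance), for every hyperedge $E=\prod_i I_i$ with varying directions $J$, $|J|=r$, and fixed values $I_i=\{b_i\}$ for $i\notin J$, I would build the full-support signed measure $\mu=\bigotimes_{j\in J}\nu_j\otimes\bigotimes_{i\notin J}\delta_{b_i}$, where $\nu_j$ is the divided-difference combination on the $t$ points of $I_j$ that annihilates $1,X_j,\dots,X_j^{t-2}$ and has all coefficients nonzero. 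Because each $\alpha\in S$ has at most $r-1$ large entries while $|J|=r$, some $j\in J$ has $\alpha_j\le t-2$, so the corresponding factor of $\int X^\alpha\,d\mu=\prod_{i\notin J}b_i^{\alpha_i}\prod_{j\in J}\int X_j^{\alpha_j}\,d\nu_j$ vanishes. Thus $\sum_{u\in E}c_u w_u=0$ with all $c_u\ne0$, so each $w_u$ lies in the span of the others. Consequently the span of $\{w_x:x\in A\}$ is unchanged by infection, a percolating $A_0$ must already span $\mathbb R^S$, and $|A_0|\ge|S|=|L|$.

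The main obstacle is the lower bound. The upper bound is genuinely combinatorial, but the argument above is linear-algebraic (this is essentially the method of \cite{Balogh2012}); its combinatorial heart is the single pigeonhole step that some direction of an $r$-set must be small. Converting the global rank computation into a direct combinatorial injection or a monotone potential-function argument, valid for \emph{all} parameters rather than only a base case, is exactly where the difficulty lies and, presumably, what the remainder of the paper addresses.
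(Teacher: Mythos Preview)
The paper does not actually prove this theorem; it is quoted from \cite{Balogh2012}, and the whole point of the paper is to seek \emph{combinatorial} substitutes for that proof in special cases (the $d=t=r=2$ case, the $t=r=2$ case for general $d$, and the one-phase $d=r=2$ case for general $t$). So there is no ``paper's own proof'' of the full statement to compare against.

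That said, your proposal is correct. Your count of $|L|$ by classifying coordinates as large or small is exactly right, and your induction on the number of large coordinates showing that $L$ percolates is a clean, fully combinatorial upper-bound argument (more explicit than anything written out in the paper, which simply cites \cite{Balogh2012} for this as well). Your lower bound---assigning to each vertex the moment vector $(x^\alpha)_{\alpha\in S}$, exhibiting a full-support linear relation on each hyperedge via a tensor of divided-difference functionals, and using the pigeonhole fact that every $\alpha\in S$ is small in at least one of the $r$ varying directions---is precisely the linear-algebraic method of \cite{Balogh2012}. You identify this yourself in the last paragraph, and you are right that the remainder of the paper does \emph{not} succeed in replacing this rank argument by a combinatorial one in full generality: it handles only $t=r=2$ for arbitrary $d$ (via the slice-union lemma and induction on the edgesum) and, under the extra one-phase hypothesis, $d=r=2$ for arbitrary $t$. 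The general combinatorial lower bound remains open in the paper.
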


\begin{figure}[htbp]
	\centering
	\includegraphics[width=0.4\linewidth]{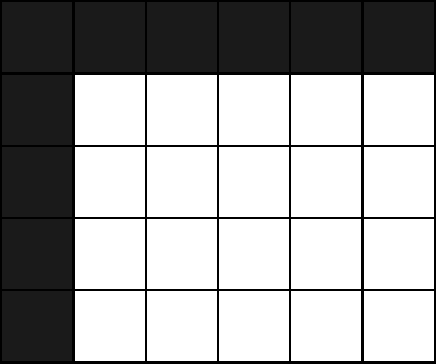}
	\caption {A visualization of the set $L_{5,6}^{2,2,2}$}
	\label{fig:l_set}
\end{figure}

We stated the theorem solely for hyperrectangle graphs, the original theorem is more general.
In this work, we study the above problem using only elementary, combinatorial methods. We give an elementary proof of the case $t=r=2$ for arbitrary dimension $d$,
and present some lemmas and ideas which may help to tackle the general case.
The combinatorial proofs in this work were all first presented in the second author's Bachelor's thesis \cite{Suli2024}.

\section{The Two-Dimensional Case}

We visualize the vertex set $[n_1] \times [n_2]$ of a hyperrectangle graph as a two-dimensional array of squares. We index the rows top to bottom in increasing order with the elements of $[n_1]$ and columns left to right in increasing order with the elements of $[n_2]$. If $r=2$ and $t=2$, a quadruple of squares or vertices form a hyperedge if and only if they are at the corners of an axis-aligned rectangle, see Figure \ref{fig:inf_set}. The white and light gray entries represent non-infected vertices while the dark gray entries represent infected vertices. The dark and light gray entries form an infecting set of the light gray entry. The light gray entry represents the vertex $(2,5)$.

\begin{figure}[htbp]
	\centering
	\includegraphics[width=0.4\linewidth]{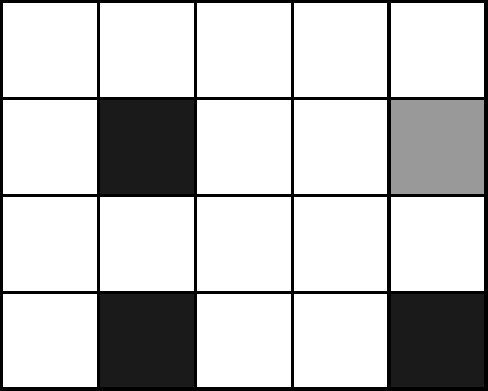}
	\caption{A visualization of an infecting set}
	\label{fig:inf_set}
\end{figure}

\begin{definition}\label{def:row}
	Fix $k \in [n_1]$ and a set $A\subseteq[n_1]\times[n_2]$. Let $A|_{(k,*)}$ be the set of elements of $A$ which have $k$ as their first coordinate.
	We call the set $A|_{(k,*)}$ the \emph{$k$th row} of $A$. The subset of $[n_2]$ containing the second coordinates of the elements of $A|_{(k,*)}$ is the \emph{$k$th $P$-row} of $A$
	(or the \emph{projection} of the $k$th row).
	The \emph{$j$th column} and the \emph{$j$th $P$-column} are defined analogously by switching the roles of $[n_1]$ and $[n_2]$, and of the first and second coordinates.
\end{definition}

First, we study the problem when $d=t=r=2$, and $n_1$ and $n_2$ are arbitrary. We present a combinatorial proof of this special case of Theorem \ref{theorem:BBMR}. 

\begin{proposition} \label{prop:2D}
	$m((n_1, n_2), 2,2,2) = n_1 + n_2 - 1$ for any $n_1, n_2$ positive integers.
\end{proposition}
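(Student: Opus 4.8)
The plan is to prove the two inequalities $m((n_1,n_2),2,2,2) \le n_1+n_2-1$ and $m((n_1,n_2),2,2,2) \ge n_1+n_2-1$ separately, with essentially all of the content sitting in the lower bound.

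For the upper bound I would exhibit an explicit percolating set of the required size, namely the ``L-shape''
$$
L := \{(1,j) : j \in [n_2]\} \cup \{(i,1) : i \in [n_1]\}
$$
consisting of the first row together with the first column, which has exactly $n_1+n_2-1$ elements. To see that it percolates, note that for any $(i,j)$ with $i,j \ge 2$ the three vertices $(1,1), (1,j), (i,1)$ all lie in $L$ and are three corners of the axis-aligned rectangle whose fourth corner is $(i,j)$; hence $(i,j)$ is infected already in the first phase, and $\overline{L} = [n_1]\times[n_2]$.

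For the lower bound the key idea is to encode a vertex set $A \subseteq [n_1]\times[n_2]$ as a bipartite graph. Introduce a vertex $r_i$ for each row $i \in [n_1]$ and a vertex $c_j$ for each column $j \in [n_2]$, and let $G_A$ be the bipartite graph on $\{r_1,\ldots,r_{n_1}\} \cup \{c_1,\ldots,c_{n_2}\}$ with an edge $r_i c_j$ precisely when $(i,j) \in A$. The crucial observation --- the step I expect to carry the whole argument --- is that a single infection step never changes the partition of the vertices of $G_A$ into connected components. Indeed, working with step-by-step percolation (adding one vertex, hence one edge, at a time), suppose $(i,j)$ is infected by a rectangle with already-infected corners $(i,b), (a,j), (a,b)$, where $a \ne i$ and $b \ne j$. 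Then $G_A$ already contains the path $r_i - c_b - r_a - c_j$, so $r_i$ and $c_j$ lie in the same connected component; adding the edge $r_i c_j$ therefore leaves every component unchanged.

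Consequently the connected components of $G_A$ coincide with those of $G_{\overline{A}}$. If $A$ percolates then $\overline{A} = [n_1]\times[n_2]$, so $G_{\overline{A}}$ is the complete bipartite graph $K_{n_1,n_2}$, which is connected; hence $G_A$ must itself be connected, i.e.\ a connected graph on $n_1+n_2$ vertices. Since any connected graph on $N$ vertices has at least $N-1$ edges (it contains a spanning tree), we obtain $|A| = |E(G_A)| \ge n_1+n_2-1$, which completes the lower bound. The main obstacle is isolating the right monotone invariant; once connectivity-preservation is identified, both its verification and the spanning-tree edge count are routine. Finally I would remark that $n_1+n_2-1$ is exactly the value of the formula in Theorem~\ref{theorem:BBMR} specialized to $d=t=r=2$ (the $s=0$ term contributes $1$ and the two $s=1$ terms contribute $(n_1-1)+(n_2-1)$), so the proposition is indeed the claimed special case.
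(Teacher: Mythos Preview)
Your proof is correct and takes a genuinely different route from the paper's. The upper bound is identical (the same L-shape), but for the lower bound the paper argues by induction on $n_1$ via Lemma~\ref{lemma:2Dunion}: given a percolating $A\subseteq[n_1]\times[n_2]$, it locates two rows with intersecting $P$-rows (forced by the existence of any infecting set), replaces them by their union to get a percolating $A^-\subseteq[n_1-1]\times[n_2]$ with $|A^-|\le|A|-1$, and invokes the induction hypothesis. Your bipartite-graph encoding with the connectivity invariant is cleaner in two dimensions --- it bypasses the case analysis needed for the union lemma and reaches the spanning-tree bound in one stroke. The trade-off is that the paper's row-union operation is exactly what is later generalized to a slice-union operation (Lemma~\ref{lemma:gen_Dunion}) to handle arbitrary dimension $d$ with $t=r=2$; it is less clear how to carry your connectivity argument to that setting, since a $d$-dimensional infected point no longer corresponds naturally to a single edge in a graph on $\sum_i n_i$ vertices.
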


Notice that due to the symmetry in the definition of hyperedges of the hyperrectangle graph, the property of whether a set percolates or not is invariant under row and column permutations.
Therefore, one can consider the multiset of $P$-rows of an initially infected vertex set only to determine if it percolates or not. We apply this observation in the following lemma which is crucial to our elementary proof.

\begin{definition}
	Let $r_1$ and $r_2$ be two rows of a set $A\subseteq[n_1]\times[n_2]$. The \emph{union} $r_1\cup r_2$ is a row whose projection is equal to the union of
	the projections of $r_1$ and $r_2$.
\end{definition}

\begin{lemma}\label{lemma:2Dunion}
	Let $A \subseteq [n_1] \times [n_2]$ be a percolating set, and $t=r=2$. Let $A^-$ be a set constructed by removing two rows of $A$ and adding their union as a new row. Then, the set $A^- \subseteq [n_1-1] \times [n_2]$ percolates.
\end{lemma}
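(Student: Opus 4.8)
The plan is to exhibit an explicit map that carries the percolation of $A$ onto a percolation of $A^-$. Suppose $A^-$ is obtained by deleting the rows at positions $a$ and $b$ and inserting their union as a new row $c$; since percolation is invariant under row and column permutations, I am free to place $c$ anywhere, so I define the surjection $\phi\colon [n_1]\times[n_2] \to [n_1-1]\times[n_2]$ that is the identity on columns, sends both $a$ and $b$ to $c$, and restricts to a bijection between the remaining rows of the two grids. The point of this map is that $\phi(A)=A^-$: a cell of $A$ in a row other than $a,b$ maps to the corresponding cell of $A^-$, while the cells of $A$ in rows $a$ and $b$ map exactly onto the new row $c$, whose $P$-row is the union of the two deleted $P$-rows by construction. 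The whole proof then reduces to the single containment $\phi(\overline{A})\subseteq\overline{A^-}$: granting it, if $A$ percolates then $\overline{A}=[n_1]\times[n_2]$, and surjectivity of $\phi$ forces $\overline{A^-}=[n_1-1]\times[n_2]$, i.e.\ $A^-$ percolates.

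To prove $\phi(\overline{A})\subseteq\overline{A^-}$ I would induct on the phases $A_0=A\subseteq A_1\subseteq\cdots$ of the percolation of $A$, showing $\phi(A_i)\subseteq\overline{A^-}$ for every $i$. The base case is exactly the identity $\phi(A)=A^-\subseteq\overline{A^-}$ noted above. For the inductive step, a cell $v=(i_0,j_0)$ newly infected in phase $i$ comes with an infecting rectangle whose other three corners $(i_0,j_1),(i_1,j_0),(i_1,j_1)$ (with $i_0\neq i_1$, $j_0\neq j_1$) lie in $A_{i-1}$, hence their $\phi$-images lie in $\overline{A^-}$ by the inductive hypothesis. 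I then want to recover $\phi(v)=(\phi(i_0),j_0)$ by completing the image rectangle inside $\overline{A^-}$, using that $\overline{A^-}$ is closed under completing any three corners of a hyperedge to the fourth (it is its own full form).

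The one place where this could break, and the step I expect to be the crux, is the degenerate case $\{i_0,i_1\}=\{a,b\}$: here $\phi(i_0)=\phi(i_1)=c$, so the four images do not span two distinct rows, the image configuration is not a hyperedge, and there is nothing to complete. The resolution is that in exactly this case the target $\phi(v)=(c,j_0)$ coincides with the image $(\phi(i_1),j_0)$ of the already-infected corner $(i_1,j_0)$, so $\phi(v)\in\phi(A_{i-1})\subseteq\overline{A^-}$ for free. In every other case $\{i_0,i_1\}\neq\{a,b\}$ forces $\phi(i_0)\neq\phi(i_1)$, the image is a genuine hyperedge with three infected corners, and closedness of $\overline{A^-}$ yields $\phi(v)$. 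Handling this dichotomy cleanly — and checking that $\phi(i_0)\neq\phi(i_1)$ really does hold whenever $\{i_0,i_1\}\neq\{a,b\}$, which is just injectivity of $\phi$ off the pair $\{a,b\}$ — is the only delicate point; the rest is bookkeeping.
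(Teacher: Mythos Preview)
Your proof is correct and follows essentially the same approach as the paper: both define the row-collapsing map (the paper calls it $\mathcal{U}$, you call it $\phi$) and verify by the same case split---the degenerate case $\{i_0,i_1\}=\{a,b\}$ where the image collapses, versus the generic case where the image rectangle is a genuine hyperedge---that infection steps push forward. The only cosmetic difference is that the paper tracks a step-by-step process and builds an explicit percolation sequence for $A^-$, whereas you induct on phases and appeal directly to the closedness of $\overline{A^-}$; the underlying argument is identical.
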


\begin{proof}
	Let $K$ be an arbitrary subset of $[n_1] \times [n_2]$. Let $\mathcal{U}(K)$ be the set constructed by removing the $n_1$th row and replacing the ($n_1-1$)th row with the union of the (original) $n_1$th and $(n_1-1)$th rows of $K$. See Figure \ref{fig:union_op}.
	
	Without loss of generality, we can assume that $A^-$ is $\mathcal{U}(A)$.
	Consider a step-by-step percolation process of $A$. Let $\ell$ denote the smallest index for which $A^{(\ell)} = \overline{A}$.
	
	$$A=A^{(0)}, A^{(1)}, A^{(2)}, \ldots, A^{(\ell)}=[n_1]\times[n_2]$$	
	Let $K$ and $L$ be subsets of $[n_1] \times [n_2]$. We show that if $L$ can arise as the union of $K$ and a vertex which has an infecting subset in $K$, then either $\mathcal{U}(L)$ arises the same way from $\mathcal{U}(K)$ or $\mathcal{U}(L)=\mathcal{U}(K)$. From this, the proposition follows, since after removing repeating elements, the sequence 
	$$A^-=\mathcal{U}(A^{(0)}),\,\mathcal{U}(A^{(1)}),\,\mathcal{U}(A^{(2)}),\,\dots,\,\mathcal{U}(A^{(\ell)})=[n_1-1]\times[n_2]$$	
	is a step-by-step percolation process of $A^-$.
	
	We prove this by case analysis. Let $L=K\cup\{v\}$, where $v$ is the infected vertex with infecting set $E=I_1\times I_2$.
	
	\begin{itemize}
		\item If $I_1\subseteq[n_1-2]$, in other words, if $E$ does not intersect the rows of $K$ modified by $\mathcal{U}$, then $\mathcal{U}(L)=\mathcal{U}(K)\cup\{v\}$. Therefore, $v$ is infected by $E^*:=E$ in $\mathcal{U}(K)$.
		
		\item If $I_1=\{n_1-1,n\}$, then $\mathcal{U}(L)=\mathcal{U}(K)$ since the union of $(n_1-1)$th and $n_1$th rows coincide in $K$ and $L$, and the rest of the rows are identical in $K$ and $L$.
		
		\item Finally, if $|I_1\cap\{n_1-1,n_1\}|=1$, then we have two subcases. 
		
		\begin{itemize}
			\item If $v$ is not in the $(n_1-1)$th or $n_1$th rows, then $\mathcal{U}(L)=\mathcal{U}(K)\cup\{v\}$. Let $I_1^*=(I_1\cap[n-2])\cup\{n-1\}$. Then, the vertex $v$ is infected by $E^*:= I_1^*\times I_2$ in $\mathcal{U}(K)$.
			
			\item Else, $v$ is in the $(n_1-1)$th or $n_1$th row. In this case, let $v^*$ be the vertex is obtained from $v$ by setting its first coordinate to $(n_1-1)$. Then $\mathcal{U}(L) = \mathcal{U}(K) \cup \{v^*\}$. If $v^* \in \mathcal{U}(K)$, then $\mathcal{U}(L)=\mathcal{U}(K)$ holds. Otherwise, $v^*$ is infected by the infecting set $E^*:=I_1^*\times I_2$ in $\mathcal{U}(K)$ where $I_1^*$ is defined as in the previous subcase.

		\end{itemize}

	\end{itemize}
	
	We have exhausted all cases and the proof is complete.
\end{proof}

\begin{figure}[htbp]
	\centering
	\includegraphics[width=0.8\linewidth]{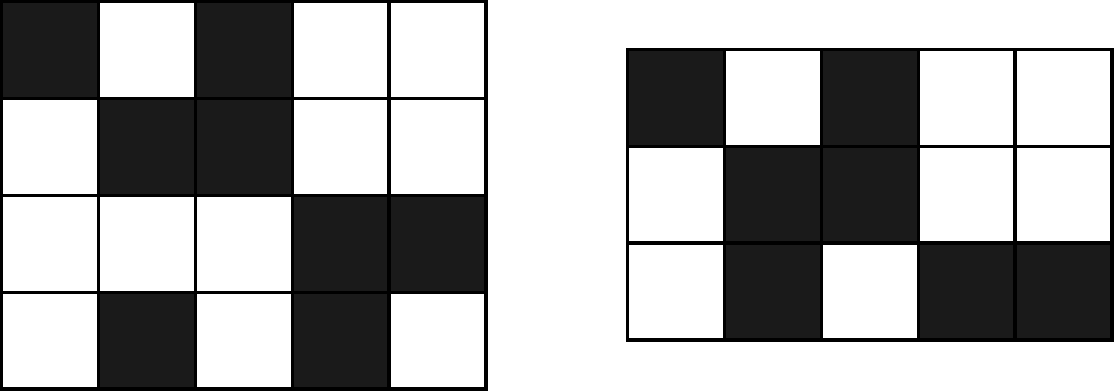}
	\caption {A visualization of a set (left) and its image under operation $\mathcal{U}$ (right)}
	\label{fig:union_op}
\end{figure}

\begin{proof}[Proof of Proposition \ref{prop:2D}]
	It is easy to check that the set $L := \{(x_1,x_2) \in [n_1] \times [n_2]: x_1 = 1 \text{ or } x_2=1\}$ is a percolating set in $[n_1] \times [n_2]$ with $n_1+n_2-1$ elements. It remains to show that $|A| \geq n_1+n_2-1$ for all percolating sets $A$.
	
	We prove this by induction on $n_1$. If $n_1 = 1$ then $A$ can not contain an infecting set. Therefore, it only percolates if and only if $A= \{1\} \times [n_2]$. The size of this set is $n_2=n_1+n_2-1$, thus the assertion holds.
	
	Assume that $n_1 \geq 2$ and the proposition holds for all subsets that percolate in $[n_1-1] \times [n_2]$. Consider the percolating set $A \subseteq [n_1] \times [n_2]$. Then, $A$ has two $P$-rows which are non-disjoint. This is because if $A \neq [n_1] \times [n_2]$, then the projections of the two rows containing the entries of any infecting set must have a non-empty intersection.
	Let $r_1$ and $r_2$ be two rows of $A$ whose projections are non-disjoint, and consider the set $A^-$ obtained by removing $r_1$ and $r_2$ and adding $r_1 \cup r_2$. Due to Lemma \ref{lemma:2Dunion}, the set $A^-$ also percolates, and since the rows $r_1$ and $r_2$ are non-disjoint, we have $|A| \geq |A^-| + 1$. Then, by the induction hypothesis,
	$$|A| \geq |A^-| + 1 \geq (n_1-1+n_2-1) + 1 = n_1+n_2-1$$
	holds and this completes the proof.
\end{proof}

Next, we show a combinatorial property of bootstrap percolation in the two-dimensional case. 

\begin{proposition}\label{prop:removal}
	Let $A \subseteq [n_1] \times [n_2]$ be a percolating set with $d=r=2$ and an arbitrary value of $t$. If a row (or column) containing exactly $t-1$ elements is removed from $A$, the resulting set $A^-$ still percolates in $[n_1-1] \times [n_2]$ (or in $[n_1] \times [n_2-1]$).
\end{proposition}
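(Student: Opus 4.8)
The plan is to reduce the statement to a purely ``closedness'' question by first replacing the deletion of a row with the \emph{emptying} of that row. By symmetry it suffices to treat rows, so say the removed row is $A|_{(k,*)}$, which has exactly $t-1$ elements (here $d=r=2$, so every hyperedge is a product $I_1\times I_2$ with $|I_1|=|I_2|=t$). Deleting row $k$ and relabelling produces $A^-\subseteq[n_1-1]\times[n_2]$, but I claim this behaves exactly like keeping row $k$ in the grid $[n_1]\times[n_2]$ while setting it empty. Indeed, let $A^\circ:=A\setminus A|_{(k,*)}$. A hyperedge through row $k$ can fire (to infect a cell either in or outside row $k$) only when at least $t-1\ge 1$ of its cells in row $k$ are already infected; since row $k$ starts empty in $A^\circ$ it can never acquire even its first cell, so no through-row-$k$ hyperedge ever fires. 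As hyperedges avoiding row $k$ correspond bijectively, under the relabelling, to hyperedges of $[n_1-1]\times[n_2]$, the closure $\overline{A^\circ}$ meets row $k$ trivially and, outside row $k$, coincides with $\overline{A^-}$. Hence it suffices to prove that $\overline{A^\circ}$ contains every cell \emph{not} in row $k$.

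The leverage comes from the hypothesis that $A$ percolates. Writing $R:=A|_{(k,*)}$ (the $t-1$ removed cells) and $C:=\overline{A^\circ}$, we have $\overline{C\cup R}=\overline{A^\circ\cup R}=\overline{A}=[n_1]\times[n_2]$, while $C$ is a \emph{closed} set containing no cell of row $k$. The goal now reads: $C$ already contains every cell outside row $k$. Equivalently, if one runs a step-by-step percolation of $C\cup R$ (which we know fills the whole grid), it never infects a new cell outside row $k$; it merely completes row $k$. This is the form I would prove by induction along the process.

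The engine of that induction is the closedness of $C$ together with a $t\times t$ rectangle witness. Suppose a step infects a cell $v=(a,b)$ outside row $k$ via a hyperedge $I_1\times I_2$. If $k\notin I_1$, then $(I_1\times I_2)\setminus\{v\}$ lies outside row $k$ and hence in $C$, so closedness of $C$ already forces $v\in C$, and $v$ is not new. If $k\in I_1$, firing requires $\{k\}\times I_2\subseteq$ the current set, i.e.\ row $k$ is full on the $t$ columns $I_2$. Since row $k$ began with only $t-1$ cells, its completion on $I_2$ must have been \emph{witnessed}: tracing the cells of $\{k\}\times I_2$ back to the moments they were infected should produce $t-1$ rows other than $k$ that are full on $I_2$ and lie in $C$. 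Combining these $t-1$ witness rows with row $a$ (which carries $\{a\}\times(I_2\setminus\{b\})\subseteq C$) yields a $t\times t$ hyperedge all of whose cells except $(a,b)$ lie in $C$; closedness of $C$ then forces $(a,b)\in C$, so again $v$ is not new. For $t=2$ this is immediate: a single witness row full on the two relevant columns, together with closedness, already places $v$ in $C$. Once no new cell outside row $k$ can appear, $C$ must equal everything outside row $k$, and the reduction finishes the proof.

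The hard part will be the \emph{column alignment} hidden in the witnessing step. The rows that witness the infection of the individual cells of $\{k\}\times I_2$ are, a priori, full on possibly different $t$-column sets than $I_2$, so the naive trace does not immediately hand us $t-1$ rows full on exactly $I_2$ inside $C$. My plan to overcome this is to induct on the order in which row $k$'s columns are completed, maintaining the invariant that for every $t$-subset $J$ of row $k$'s currently infected columns there exist $t-1$ rows (other than $k$) that are full on $J$ within $C$, and to preserve this invariant across each newly completed column by repeatedly invoking the closedness of $C$ to \emph{transport} fullness from the witnessing column sets onto the target set $I_2$. This closedness-driven transport is where the genuine combinatorial effort lies; the reduction and the rectangle argument around it are essentially bookkeeping.
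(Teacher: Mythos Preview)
Your reduction to the closed set $C=\overline{A^\circ}$ and the plan to show that percolation from $C\cup R$ only touches row $k$ is correct and is a genuinely different route from the paper's. The paper never passes to a closure; it engineers a step-by-step process from $A$ itself. At the first infection in row $n_1$ it records the $t-1$ other rows $S$ of that infecting edge, and thereafter \emph{postpones} any infection $(n_1,c)$ in the last row until $S\times\{c\}$ is already present (it checks this is always possible), so that the domination $r_{n_1}\subseteq\bigcap_{s\in S}r_s$ holds at every later step. Any infecting edge using row $n_1$ can then have that row swapped for some $s\in S$, and dropping the last row from the resulting ``perfect'' process yields a percolation of $A^-$. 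Your approach trades this ordering gymnastics for the closedness of $C$; the price is the ``transport'' step.

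That step, which you flag as the hard part, is in fact a single closedness application, so your per-$J$ invariant is easy to maintain. Suppose the invariant holds for the current column set $\mathrm{Col}_k$ of row $k$, and a new column $c$ is added via $I_1\times I_2$ with $k\in I_1$; set $W:=I_1\setminus\{k\}$, so $W\times I_2\subseteq C$. Given a $t$-subset $J\ni c$ of $\mathrm{Col}_k\cup\{c\}$ and any $j\in J\setminus I_2$, apply the invariant to the $t$-set $J':=(I_2\setminus\{c\})\cup\{j\}\subseteq\mathrm{Col}_k$ to obtain $t-1$ rows $W'$ with $W'\times J'\subseteq C$. For each $w\in W\setminus W'$ the hyperedge $(W'\cup\{w\})\times J'$ has only $(w,j)$ missing from $C$, since $\{w\}\times(J'\setminus\{j\})=\{w\}\times(I_2\setminus\{c\})\subseteq C$; closedness of $C$ forces $(w,j)\in C$. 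Hence $W$ itself is full on $J$, and the invariant persists with witness $W$. This actually shows $W$ is full on all of $\mathrm{Col}_k\cup\{c\}$, so inside your framework one recovers the paper's fixed-$S$ domination as well; the two proofs share the same underlying invariant, reached from opposite directions.
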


\begin{proof}[Proof]
	Fix an arbitrary $t\ge2$. Without loss of generality we assume that the last ($n_1$th) row of $A$ contains $t-1$ elements and $A^-$ is obtained from $A$ by the removal of the last row.
	The notation $B^-$ will be used analogously for arbitrary subsets $B \subseteq [n_1] \times [n_2]$.
	Throughout this proof, we employ the following notations for an
	\begin{equation*}A=A^{(0)}, A^{(1)}, A^{(2)}, \ldots, A^{(\ell)}=[n_1]\times[n_2]\tag{$*$}\end{equation*}
	step-by-step percolation process of $A$: For $k=1,\dots,\ell$, let $v_k$ denote the vertex that gets infected in the $k$th step, i.e.\ $A^{(k)}=A^{(k-1)}\cup\{v_k\}$,
	and let $E_k$ denote an infecting set of $v_k$ in $A^{(k-1)}$. Since $E_k$ is not unique (there can be more than one infecting set for a vertex), in this proof we always associate a fixed
	sequence $E_1,\dots,E_{\ell}$ of infecting sets to a step-by-step percolation process in ($*$) to be concrete.
	
	We will construct a special step-by-step percolation process ($*$) with infecting sets $E_1,\dots,E_{\ell}$ such that
	whenever the infected element $v_k$ is not in the last row, its infecting set $E_k$ avoids the last row (that is, $E_k$ has the form $I_1\times I_2$ for some $I_1\subseteq[n_1-1]$). We call such a process
	\emph{perfect}.
	It is obvious that for a perfect process ($*$), the sequence
	$$A^-=\left(A^{(0)}\right)^-, \left(A^{(1)}\right)^-, \left(A^{(2)}\right)^-, \ldots, \left(A^{(\ell)}\right)^-=[n_1-1]\times[n_2]$$
	is a step-by-step percolation process of $A^{-}$ after removing the repeating elements (when an element in the last row gets infected in the original process), which proves the proposition.
	
	First we construct a step-by-step percolation process ($*$) which satisfies a more technical condition. Let the \emph{hitting time} of a process ($*$) be the smallest index $h\in[\ell]$ 
	for which the infected vertex $v_h$ (of the $h$th step) is in the last row. We say that the process ($*$) is \emph{good} if there exists a $t-1$ element set $S\subseteq[n_1-1]$
	of row indices such that \begin{equation*}r_{n_1}\subseteq \bigcap_{s\in S} r_s\tag{$**$}\end{equation*}
	holds for all sets $A^{(k)}$ with $k\ge h$ (where $h$ is the hitting time of the process), and $r_s\subseteq[n_2]$ is the $s$th $P$-row of $A^{(k)}$ in the sense of Definition \ref{def:row}.
	(There is a slight abuse of notation here; we should write $r^{(k)}_s$ instead of $r_s$.)
	In fact, every good process ($*$) with infecting sets $E_1,\dots,E_{\ell}$ can also be made perfect by constructing a last-row-avoiding infecting set $E'_k$ for $v_k$ if $v_k$ is not in the last row:
	Before the hitting time $h$, every infecting set $E_k$ (where $k<h$) automatically avoids the last row, because there are too few (namely, $t-1$) elements in the last row to be contained in an infecting set.
	So we can set $E'_k:=E_k$ if $k<h$. Now consider an infecting set $E_k$ of a vertex $v_k$ not in the last row, for $k\ge h$. If $E_k$ avoids the last row, then there is nothing to do ($E'_k:=E_k$).
	Next assume that $E_k=I_1\times I_2$ does not avoid the last row, i.e.\ $n_1\in I_1$. Then by property~($**$), $v_k$ must be in a row whose index is not in $S$ either, and so there exist an index
	$s\in S\setminus I_1$. (Recall that $|I_1|=|S\cup\{n_1\}|=t$.) Property~($**$) implies that $E'_k:=((I_1\setminus\{n_1\})\cup\{s\})\times I_2$ is a last-row-avoiding infecting set of $v_k$ as desired.
	
	Now we are left to construct a good process ($*$). We will construct the sets $A^{(0)},\dots,A^{(\ell)}$ one by one following the algorithm described in the Introduction section. Set $A^{(0)}:=A$ and $\ell:=n_1n_2-|A|$.
	For $i=1,\dots,\ell$, let $N_i$ be the set of those vertices in $([n_1]\times[n_2])\setminus A^{(i-1)}$ for which an infecting set exists in $A^{(i-1)}$, let $v_i$
	be an arbitrary element of $N_i$, and set $A^{(i)}=A^{(i-1)}\cup\{v_i\}$. The fact that $A$ percolates guarantees that the set $N_i$ is always nonempty ($i=1,\dots,\ell$) regardless of the choice $v_j$'s, and the final set is always $A^{(\ell)}=[n_1]\times[n_2]$. So we have a freedom in the choice of $v_i$'s. Let us start the algorithm without any requirements on the $v_i$'s until we pick a vertex $v_h$ from the last row for some (smallest) index $i=h$. (This index $h$ will be the hitting time of the process just being created.) Let $E=I_1\times I_2$ be the infecting set of $v_h$ in $A^{(h-1)}$, and set $S:=I_1\setminus\{n_1\}$ for the goodness. The set $A^{(h)}:=A^{(h-1)}\cup\{v_h\}$ has been just created cleary satisfies ($**$), implied by the definition of infecting sets and the fact that $A^{(h-1)}$ has only $t-1$ elements in its last row (so all of them must be contained in $E$). From now on we will be more careful when selecting $v_i$. For $i=h+1,\dots,\ell$, we only pick an element $v$ from $N_i$
	(to set $v_i:=v$) if it will not violate condition ($**$): the selection of exactly those vertices $v$ are forbidden which are in the last row of $[n_1]\times[n_2]$ and for which $S\times\{\text{col}(v)\}\not\subseteq A^{(i-1)}$, where $\text{col}(v)$ denotes the column index of $v$. The point is that $N_i$ always contains at least one vertex $v'$ whose selection is permitted. This is because if $N_i$
	contains a vertex $v$ (in the last row) whose selection is forbidden, then all vertices of $S\times\{\text{col}(v)\}$ which are not in $A^{(i-1)}$ must be contained in $N_i$ (whose selection is permitted). To see this,
	let $u\in(S\times\{\text{col}(v)\})\setminus A^{(i-1)}$ be arbitrary. The fact $v\in N_i$ means that there exists an infecting set $E_v=I_1\times I_2$ of $v$ in $A^{(i-1)}$ with $n_1\in I_1$.
	Condition ($**$) for $A^{(i-1)}$ implies that $n_1$ can be replaced to the row index of $u$ in $I_1$ to obtain an infecting set $I'_1\times I_2$ of $u$ in $A^{(i-1)}$, that is, $u\in N_i$. Hence we verified that it is possible to pick a vertex $v_i\in N_i$ such that $A^{(i)}:=A^{(i-1)}\cup\{v_i\}$ satisfies condition ($**$). The proof is now complete.
\end{proof}

\begin{remark}
	This proposition yields another proof of the lower bound of Proposition \ref{prop:2D} via induction on $n_1+n_2$. Let $A$ be a percolating set. The statement trivially holds in $n_1=n_2=1$. Due to the symmetry of $n_1$ and $n_2$ in the definition of hyperedges, one can assume that $n_1 \geq n_2$. If every row of $A$ has at least two elements, the statement holds in this case, as $|A| \geq 2n_1 \geq n_1+n_2-1$. Otherwise, $A$ has at least one row with at most one element.
	These rows contain exactly one element since otherwise $A$ would not percolate since an empty row remains empty throughout the whole percolation process. After the removal of such a row, applying the Proposition \ref{prop:removal} with $t-1=1$ and the induction hypothesis finishes the proof.
\end{remark}

As of now, we are unaware of combinatorial proofs for values of $t$ greater than $2$. However, we can prove the tight lower bound under a special condition defined as follows.

\begin{definition}
	Let $A$ be a subset of $V(\mathcal{H})$ where $\mathcal{H}$ is an arbitrary hypergraph. The set $A$ \emph{percolates in one phase} if for all vertices in $V(\mathcal{H}) \setminus A$, there exists an infecting set in $A$.
\end{definition}

Note that in the definition of percolation by phases, by setting $A_0 = A$, this is equivalent to $A_1 = V(\mathcal{H})$. The set depicted on Figure \ref{fig:l_set} is an example of a set which percolates in one phase. We give an elementary proof of Theorem \ref{theorem:BBMR} in this special case.

\begin{theorem} \label{theorem:gen_t}
	Let $d=r=2$ and let $t \geq 2$ be arbitrary. Consider an initially infected vertex set $A \subseteq [n_1] \times [n_2]$ with $n_1, n_2 \geq t$. If $A$ percolates in one phase, then its size is at least $(n_1+n_2) \cdot (t-1) - (t-1)^2$.
\end{theorem}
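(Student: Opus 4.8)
\section*{Proof proposal}

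The plan is to recast the bound as an upper bound on the number of \emph{non}-infected vertices. Write $p = n_1 - (t-1)$ and $q = n_2 - (t-1)$; both are at least $1$ since $n_1,n_2\ge t$. An elementary expansion shows that the target quantity $(n_1+n_2)(t-1)-(t-1)^2$ equals $n_1n_2 - pq$, which is precisely the value appearing in Theorem~\ref{theorem:BBMR} for $d=r=2$. Since $|A| = n_1n_2 - |Z|$ where $Z := ([n_1]\times[n_2])\setminus A$, the assertion $|A|\ge (n_1+n_2)(t-1)-(t-1)^2$ is equivalent to $|Z|\le pq$. The first thing I would record is a reformulation of the hypothesis: because $A$ percolates in one phase, every $v\in Z$ has an infecting set $E=I_1\times I_2$ with $|I_1|=|I_2|=t$, all of whose other $t^2-1$ cells lie in $A$; thus this $t\times t$ combinatorial rectangle contains $v$ and no other point of $Z$. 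Calling such a rectangle an \emph{isolating window} of $v$, the entire problem reduces to the combinatorial statement: if every point of $Z\subseteq[n_1]\times[n_2]$ has an isolating window, then $|Z|\le pq$.

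Two structural facts come first (by a \emph{line} I mean a row or a column). First, every $P$-row and every $P$-column of $A$ has at least $t-1$ elements: a line with at most $t-2$ infected cells still contains a point of $Z$ (as $n_1,n_2\ge t$), and that point cannot find $t-1$ infected cells collinear with it inside any isolating window. Call a line \emph{tight} if it has exactly $t-1$ infected cells. The key observation is that a tight line can never play a supporting role: if row $r$ is tight and some $v$ outside row $r$ sits in a window $I_1\times I_2$ with $r\in I_1$, then row $r$ would need $t$ infected cells in the columns $I_2$, which is impossible. Hence a tight line occurs only in the isolating windows of its own points, so deleting it (together with its points of $Z$) leaves every other isolating window intact.

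This gives the main engine, an induction on $n_1+n_2$. In the base case $\min(n_1,n_2)=t$, say $n_1=t$, every window must use all $t$ rows; this forces each column to contain at most one point of $Z$ and forces each such point to create $t-1$ fully infected columns, so $Z$ meets at most $q$ columns and $|Z|\le q=pq$. For the inductive step, if $A$ has a tight row (or column) I remove it: by the previous paragraph the smaller set percolates in one phase in $[n_1-1]\times[n_2]$, the deleted line carries exactly $t-1$ infected vertices, and the inductive hypothesis $|A^-|\ge (t-1)\big((n_1-1)+n_2\big)-(t-1)^2$ upgrades to the desired bound after adding those $t-1$ vertices back.

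The genuine difficulty is the remaining case: $n_1,n_2>t$ and no line is tight, i.e.\ every line carries at least $t$ infected vertices. Here the inductive reduction stalls, since in general no line can be deleted while preserving the one-phase property — a cyclic ``every line has exactly $t$ infected cells'' configuration exhibits a one-phase set in which every row and every column is essential — and the bare counts (at least $t$ per line) are provably too weak, yielding only $|Z|\le \min\big(n_1(q-1),\,n_2(p-1)\big)$, which can exceed $pq$. To handle this case I would bound $|Z|$ directly, by assigning to each $v\in Z$ the top-left corner $(\min I_1,\min I_2)$ of a canonically chosen isolating window $I_1\times I_2$; since $|I_1|=|I_2|=t$, this corner lies in $[p]\times[q]$, so an \emph{injective} assignment would give $|Z|\le pq$ at once. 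The main obstacle is exactly proving injectivity: two distinct points of $Z$ whose canonical windows share a top-left corner can do so only when that corner is an infected cell, and ruling this out — or, more robustly, producing a system of distinct corners via Hall's theorem, or via a compression pushing $Z$ into the bottom-right $p\times q$ block while preserving isolating windows — is where the real work lies.
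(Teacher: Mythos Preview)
Your reformulation as $|Z|\le pq$ is clean, and the base case and the tight-line reduction are both correct: a line with exactly $t-1$ infected cells cannot appear in the isolating window of any point outside it, so deleting such a line preserves the one-phase property and the induction closes. But you have correctly identified---and left open---the crux: when every row and every column carries at least $t$ infected cells, your induction has no handle, and the corner-assignment idea is only a sketch. You write that proving injectivity (or substituting a Hall argument or a compression) ``is where the real work lies,'' which is an honest admission that the argument is incomplete. Indeed two distinct points of $Z$ \emph{can} have isolating windows with the same top-left corner (that corner being an infected cell), so the naive assignment fails and something genuinely further is required; none is supplied.

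The paper sidesteps this bifurcation entirely. It inducts on $n_2$ alone and \emph{always} removes the first column. If $A^-$ (i.e.\ $A$ with column~1 deleted) still percolates in one phase, the induction closes at once since the first column has at least $t-1$ infected cells. If not, there is a vertex $v$ whose only isolating windows all use column~1; after normalizing so that one such window is $[t]\times[t]$, the paper builds a set $A^*$ of the \emph{same size} by sliding, in each row, the column-1 cell (when present and the row is not already full on $[t]$) to the leftmost empty slot among columns $2,\dots,t$. One then checks that $[t-1]\times\{1\}\subseteq A^*$, that $A^-\subseteq (A^*)^-$, and---the key verification---that every vertex outside $(A^*)^-$ now has an isolating window avoiding column~1. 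Hence $(A^*)^-$ percolates in one phase, and the straightforward first-case computation applies to $A^*$, giving the bound for $A$ via $|A|=|A^*|$.

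So the paper never faces a separate ``no tight line'' case; the column-shifting construction gives a uniform reduction. Your tight-line deletion is a pleasant observation (and is close in spirit to Proposition~\ref{prop:removal}), but to complete your route you would need an actual proof of the injective assignment, and the proposal does not contain one.
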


\begin{proof}
	We prove this by induction on $n_2$. In the base case, we have $n_2=t$. It is easy to see that if $A \subseteq [n_1] \times [t]$ percolates in one phase, then $A$ must have at least $t-1$ rows with $t$ elements, and the rest of the rows of $A$ must have at least $t-1$ elements. Therefore, $A$ has at least
	$$(t-1) \cdot t + (n_1-(t-1)) \cdot (t-1) = (n_1+n_2) \cdot (t-1) - (t-1)^2$$
	elements, proving the assertion.
	
	Now, let $A$ be a set that percolates in one phase in $[n_1] \times [n_2]$ where $n_2 > t$. The first column of $A$ must contain at least $t-1$ elements, since otherwise no infecting set could exist for the non-infected vertices in the first column. Now, consider the set $A^-$ obtained from $A$ by removing its first column. We have two cases in the induction step. 
	
	\vspace{0.2cm}
	
	\textit{First case.} Assume that $A^-$ percolates in one phase in $[n_1] \times ([n_2] \setminus \{1\})$. Then, $|A| \geq |A^-| + (t-1)$, and the induction hypothesis yields
	$$|A| \geq |A^-| + (t-1) \geq (n_1+n_2-1) \cdot (t-1) - (t-1)^2 + (t-1) = (n_1+n_2) \cdot (t-1) - (t-1)^2.$$
	This verifies the theorem in this case. 
	
	\vspace{0.2cm}
	
	\textit{Second case.} Assume that $A^-$ does not percolate in one phase. Our aim is to construct a set $A^*$ for which the following requirements hold. 
	
	\begin{enumerate}[label=(\Roman*)]
		\item The size of $A^*$ is the same as the size of $A$. \label{reqI}
		
		\item The set $A^*$ has at least $t-1$ elements in its first column. \label{reqII}
		
		\item By removing the first column of $A^*$, the resulting  set $(A^*)^-$ percolates in one phase in $[n_1]  \times ([n_2] \setminus \{ 1 \})$. \label{reqIII}
	\end{enumerate}
	Requirements \ref{reqII} and \ref{reqIII} ensure that the calculations of the first case can be applied to $A^*$, which give the desired lower bound for $|A|$ as well since $|A| = |A^*|$ by requirement \ref{reqI}.
	
	Since $A^-$ does not percolate in one phase, there exists a non-infected vertex $v \in [n_1]  \times ([n_2] \setminus \{ 1 \})$ which only has an infecting set $E_0 = I_1 \times I_2$ with $1 \in I_1$. We may assume without loss of generality that $v = (t,t)$ and that one of its infecting sets is $E_0 = [t] \times [t]$. 
	
	We construct $A^*$ from $A$ row by row in the following way. Let $r_k \subseteq [n_2]$ denote the $k$th $P$-row of $A$ in the sense of Definition \ref{def:row}.
	If $1 \in r_k$ and $|r_k \cap [t]| < t$, then let the $k$th $P$-row of $A^*$ be $r_k \setminus \{1\} \cup \{\min ([t] \setminus r_k) \}$. Less formally and more visually, this means that in this case, we construct the $k$th row of $A^*$ from the $k$th row of $A$ by moving the first element of the row to the leftmost empty place in that row. If these two conditions do not hold for a $P$-row $r_k$ of $A$, we let the $k$th $P$-row of $A^*$ be $r_k$, in other words, we leave the row unchanged. See Figure \ref{fig:constr} for an example of this construction with $t=2$.

	Now, we verify that the requirements \ref{reqI}, \ref{reqII}, and \ref{reqIII} hold for $A^*$. Since $A^*$ is constructed in such a way that each of its rows have the same number of elements as the row of the same index in $A$, requirement \ref{reqI} is satisfied.
	By the construction of $A^*$, the edge $E_0$ defined above guarantees that $[t-1] \times \{1\} \subseteq A^*$, implying requirement \ref{reqII}.
	
	Finally, we prove the that requirement \ref{reqIII} holds. Let $v \notin (A^*)^-$ denote an arbitrary non-infected vertex in $[n_1] \times ([n_2] \setminus \{1\})$. Since $A^- \subseteq (A^*)^-$ by the construction of $A^*$, if $v$ has an infecting set in $A^-$, then $v$ also has an infecting set in $(A^*)^-$. Thus, we can assume that $v$ has no infecting set in $A^-$, and we need to verify that $v$ has an infecting set in $(A^*)^-$. Since $A$ percolates in one phase, $v$ has an infecting set $E = I_1 \times I_2$ in $A$. As vertex $v$ does not have an infecting set in $A^-$, $1 \in I_2$ must hold. Since $E$ is an infecting set of a vertex outside the first column, $E$ must have $t$ elements in the first column of $A$. Denote the index of the column $v$ belongs to by $k_v \geq 2$. Let $k^* = \min \big([t] \setminus (I_2 \setminus \{k_v\})\big)$ which is well-defined since $|I_2 \setminus \{k_v\}| = t-1$. The minimality of $k^*$ implies that $I_1 \times [k^*-1] \subseteq A$, therefore $I_1 \times \{k^*\} \subseteq A^*$ by the construction of $A^*$. Thus, since $v \notin A^*$, the vertex $v$ can not be in the column indexed $k^*$, i.e. $k^* \neq k_v$. Therefore, $k^* \notin I_2$, that is, $E$ avoids the $k^*$th column of $[n_1] \times [n_2]$. Now, consider the hyperedge $E^* := I_1 \times (I_2 \setminus \{1\} \cup \{k^*\})$. Since $I_1 \times \{k^*\} \subseteq A^*$, $A^- \subseteq (A^*)^-$, and $E \setminus \{v\} \subseteq A$, the set $E^*$ is an infecting set of $v$. The choice of $v$ was arbitrary, therefore it follows that $(A^*)^-$ percolates in one phase.

	With this, all requirements are verified, therefore the calculations in the first case can indeed be applied to $A^*$. This finishes the proof.
\end{proof}

\begin{figure}[htbp]
	\centering
	\includegraphics[width=0.7\linewidth]{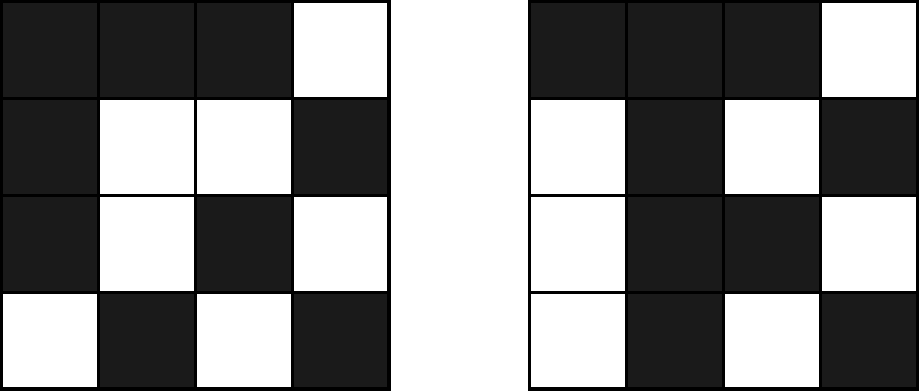}
	\caption{An example of a set $A$ percolating in one phase with $t=2$ (left) and its corresponding set $A^*$ (right)}
	\label{fig:constr}
\end{figure}

\section{A Proof for General Number of Dimensions}

In this section, we generalize the proof provided for Proposition \ref{prop:2D} to any number of dimensions. We do this by generalizing the definitions and the lemma introduced in the previous section. Then, we provide an elementary proof of the following proposition.

\begin{theorem} \label{theorem:gen_D}
	Let $d \geq 2$ and $n_1, n_2, \ldots, n_d$ be arbitrary positive integers. Then,
	$$m((n_1,n_2,\ldots,n_d),d,2,2)= \sum_{i=1}^{d} n_i - (d-1).$$
\end{theorem}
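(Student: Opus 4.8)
The plan is to prove Theorem~\ref{theorem:gen_D} by generalizing the two-dimensional argument of Proposition~\ref{prop:2D} essentially verbatim, with the roles of rows now played by lower-dimensional ``slices'' of the hypercube. The upper bound is the easy direction: I would exhibit an explicit percolating set of size $\sum_i n_i - (d-1)$, the natural candidate being $L := \{(x_1,\dots,x_d) : x_j = 1 \text{ for all but at most one } j\}$, i.e.\ the union of the $d$ coordinate axes through the origin $(1,1,\dots,1)$. A direct count gives $|L| = \sum_{i=1}^d (n_i - 1) + 1 = \sum_i n_i - (d-1)$ after accounting for the shared origin, and a short check (infecting along one axis at a time, building up hyperplanes) shows $L$ percolates. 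The substance of the theorem is the matching lower bound, which I would obtain by induction on $d$ (or on $\sum_i n_i$), reducing dimension $d$ to dimension $d-1$.

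The \textbf{key reduction step} is a higher-dimensional analogue of Lemma~\ref{lemma:2Dunion}. Fix the last coordinate direction and view a set $A \subseteq [n_1]\times\cdots\times[n_d]$ as a stack of $(d-1)$-dimensional layers $A|_{(*,\dots,*,k)}$ for $k \in [n_d]$. I would define a union operation $\mathcal{U}$ that deletes the top two layers and replaces them by a single layer whose projection onto $[n_1]\times\cdots\times[n_{d-1}]$ is the union of the two projections, landing us in $[n_1]\times\cdots\times[n_{d-1}]\times[n_d-1]$. The crucial claim to establish is that if $A$ percolates then $\mathcal{U}(A)$ percolates, proved exactly as in Lemma~\ref{lemma:2Dunion}: run a step-by-step percolation of $A$, apply $\mathcal{U}$ to every set in the sequence, and verify by case analysis on how an infecting hyperedge $E = I_1\times\cdots\times I_d$ meets the two merged layers that $\mathcal{U}$ carries the process to a valid (possibly shorter) step-by-step percolation of $\mathcal{U}(A)$. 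The cases are organized by whether $I_d$ avoids $\{n_d-1,n_d\}$, equals $\{n_d-1,n_d\}$, or meets it in exactly one index, mirroring the three bullets of the two-dimensional proof. Because $t=r=2$, each hyperedge has exactly two coordinates of size $2$, so at most one of them lies in the merged direction; this keeps the case analysis finite and manageable.

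For the \textbf{induction itself} I would argue as in the proof of Proposition~\ref{prop:2D}: given a percolating $A \subseteq [n_1]\times\cdots\times[n_d]$ with $n_d \geq 2$, one shows that two of the top-direction layers must have non-disjoint projections (otherwise no infecting hyperedge could ever fire across that direction, so the set could not fill out the full cube), merge them via $\mathcal{U}$ to obtain a percolating $A^- \subseteq [n_1]\times\cdots\times[n_{d-1}]\times[n_d-1]$ with $|A| \geq |A^-| + 1$, and invoke the induction hypothesis for parameter $n_d - 1$. The base of the induction is the case where some $n_i = 1$, which collapses to the $(d-1)$-dimensional statement, bottoming out eventually at the two-dimensional Proposition~\ref{prop:2D}. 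The \textbf{main obstacle} I anticipate is not the bookkeeping of the induction but rather pinning down the non-disjointness claim and the $\mathcal{U}$-invariance in full rigor in $d$ dimensions: the two-dimensional case analysis relied on $v$ and its infecting rectangle living in at most two rows, whereas here an infecting hyperedge spans a genuinely $(d-1)$-dimensional slab, and one must check carefully that replacing the index $n_d$ by $n_d-1$ inside $I_d$ (when $E$ meets only one of the two merged layers) still yields a legitimate infecting set in $\mathcal{U}(K)$ for every configuration of $v$. Getting that replacement argument airtight — together with the ``project to $(n_d-1)$'' normalization of $v$ when $v$ itself lies in a merged layer — is where the real work lies.
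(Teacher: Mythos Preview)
Your overall plan---generalize Lemma~\ref{lemma:2Dunion} to a slice-union operation $\mathcal{U}$ along one coordinate, prove it preserves percolation via a step-by-step case analysis, then induct---is precisely the paper's strategy, and your outline of the $\mathcal{U}$-invariance argument is sound. However, there is a genuine gap in the inductive step. Your claim that, for a percolating $A$ with $n_d \geq 2$, two layers along the \emph{fixed} $d$th direction must have non-disjoint projections is false once $d \geq 3$. For a concrete counterexample take $d=3$, $n_1=n_2=n_3=2$, and $A=\{(1,1,1),(1,2,1),(2,1,1),(2,2,2)\}$: the two $x_3$-layers project to $\{(1,1),(1,2),(2,1)\}$ and $\{(2,2)\}$, which are disjoint, yet $A$ percolates (first $(2,2,1)$ is infected \emph{within} the bottom layer, and only then do cross-layer infections become available). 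Your parenthetical justification ``otherwise no infecting hyperedge could ever fire across that direction'' overlooks exactly this: within-layer infections can create overlaps that were absent initially. The two-dimensional proof gets away with this only because there every hyperedge spans two rows.

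The paper repairs this by not committing to a coordinate direction in advance. One inducts on the edgesum $\sum_i n_i$; at the inductive step one observes that a percolating $A \neq [n_1]\times\cdots\times[n_d]$ already contains some infecting set $E = I_1\times\cdots\times I_d$, picks any coordinate $k$ with $|I_k|=2$, and merges the two $k$-slices meeting $E$. Since three of the four vertices of $E$ lie in $A$, those two slices automatically have overlapping projections, giving $|A| \geq |A^-| + 1$. With this adaptive choice of direction (and base case $n_1=\cdots=n_d=1$) your argument goes through verbatim.
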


Now we define the higher dimensional analogues of rows and columns.

\begin{definition}
	Let $A \subseteq [n_1] \times [n_2] \times \ldots \times [n_d]$ and let $k \in [d]$. Define the following mapping:
	$$\Pi_k: \mathbb{Z}^d \rightarrow \mathbb{Z}^{d-1}, (x_1, x_2, \ldots, x_d) \mapsto (x_1, x_2, \ldots, x_{k-1}, x_{k+1}, \ldots, x_d).$$
	Let $A^m_k$ denote the set of elements of $A$ which have $m$ as their $k$th coordinate. We call $A^m_k$ the \emph{the $m$th slice of $A$ along the $k$th coordinate},
	and we call $\Pi_k(A^m_k)$ the projection of $A^m_k$ or \emph{the $m$th $P$-slice of $A$ along the $k$th coordinate}.
	The union of slices $s_1$ and $s_2$ along the $k$th coordinate is a slice along the $k$th coordinate whose projection is equal to the union of projections of $s_1$ and $s_2$.
\end{definition}

\begin{remark}
	The rows and columns of a two-dimensional set are the slices of this set according to their first and second coordinate, respectively.
	Similarly to the two-dimensional case, whether or a set percolates or not remains invariant under permutation of the slices along any coordinate. 
\end{remark}

\begin{lemma}\label{lemma:gen_Dunion}
	Let $A \subseteq [n_1] \times [n_2] \times \ldots \times [n_d]$ be a percolating set.
	Let $A^-$ be a set constructed by removing two slices along the same coordinate of $A$ and adding their union as a new slice. Then, $A^-$ percolates.
\end{lemma}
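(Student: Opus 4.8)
The plan is to generalize the two-dimensional argument of Lemma~\ref{lemma:2Dunion} essentially verbatim, replacing ``rows'' by ``slices along a fixed coordinate.'' By the permutation-invariance noted in the preceding remark, I first reduce to a canonical situation: assume the two slices being merged are taken along the $d$th coordinate (say), and that they are the last two slices, the $n_d$th and $(n_d-1)$th. Define an operation $\mathcal{U}$ on arbitrary subsets $K \subseteq [n_1]\times\cdots\times[n_d]$ that deletes the $n_d$th slice along the $d$th coordinate and replaces the $(n_d-1)$th slice by the union of the original $(n_d-1)$th and $n_d$th slices. Without loss of generality $A^- = \mathcal{U}(A)$, and $A^- \subseteq [n_1]\times\cdots\times[n_{d-1}]\times[n_d-1]$.

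Next I would fix a step-by-step percolation process $A = A^{(0)}, A^{(1)}, \dots, A^{(\ell)} = [n_1]\times\cdots\times[n_d]$ of $A$, which exists since $A$ percolates. The core claim is the single-step tracking lemma: if $L = K \cup \{v\}$ where $v$ has an infecting set $E = I_1\times I_2\times\cdots\times I_d$ in $K$, then either $\mathcal{U}(L) = \mathcal{U}(K)$, or $\mathcal{U}(L)$ arises from $\mathcal{U}(K)$ by infecting a single vertex. Granting this claim, applying $\mathcal{U}$ to the whole process and deleting repeated consecutive terms yields a valid step-by-step percolation process of $A^-$ ending at $[n_1]\times\cdots\times[n_{d-1}]\times[n_d-1]$, so $A^-$ percolates.

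The single-step claim is proved by the same case analysis as in Lemma~\ref{lemma:2Dunion}, organized according to how the ``merging'' coordinate block $I_d$ interacts with the indices $\{n_d-1, n_d\}$. If $I_d \subseteq [n_d-2]$, then $E$ avoids both merged slices and $v$ is infected by $E^* := E$ in $\mathcal{U}(K)$. If $I_d = \{n_d-1, n_d\}$ (recall $t=2$, so every size-$t$ block has exactly two elements), then the merged slice is identical in $K$ and $L$, giving $\mathcal{U}(L) = \mathcal{U}(K)$. The remaining case is $|I_d \cap \{n_d-1,n_d\}| = 1$: here I set $I_d^* := (I_d \cap [n_d-2]) \cup \{n_d-1\}$ and split into the subcase where $v$ lies outside the two merged slices (then $\mathcal{U}(L) = \mathcal{U}(K)\cup\{v\}$ and $v$ is infected by $E^* := I_1\times\cdots\times I_{d-1}\times I_d^*$), and the subcase where $v$ lies in one of the two merged slices, where I push $v$ to its image $v^*$ under the merge (set the $d$th coordinate to $n_d-1$) and argue that either $v^* \in \mathcal{U}(K)$, giving $\mathcal{U}(L)=\mathcal{U}(K)$, or $v^*$ is infected by $E^* := I_1\times\cdots\times I_{d-1}\times I_d^*$ in $\mathcal{U}(K)$.

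The step where I expect the most care, rather than genuine difficulty, is verifying that $E^*$ really is a valid infecting set in $\mathcal{U}(K)$ in the last subcase: one must check that all vertices of $E^* \setminus \{v^*\}$ survive into $\mathcal{U}(K)$, using that the replacement slice is a union of two slices of $K$ (so an element present in either contributing slice is present in the merged slice), and that replacing the coordinate $n_d$ or $n_d-1$ by $n_d-1$ does not accidentally collapse two distinct elements of $E \setminus \{v\}$ onto a vertex not in $\mathcal{U}(K)$. Since $t=2$ forces each nontrivial block to have exactly two elements, the bookkeeping remains as clean as in the two-dimensional proof, and the argument goes through for every coordinate direction symmetrically.
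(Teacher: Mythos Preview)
Your proposal is correct and follows essentially the same approach as the paper's sketch: define $\mathcal{U}$ on the last two slices along the merging coordinate, push a step-by-step percolation through $\mathcal{U}$, and verify the single-step tracking claim by case analysis on how $I_d$ meets $\{n_d-1,n_d\}$. The only cosmetic difference is that the paper separates off the case $|I_k|=1$ (infecting set contained in a single slice) and then explicitly invokes Lemma~\ref{lemma:2Dunion} for the $|I_k|=2$ case by restricting to the two varying coordinates, whereas you redo that two-dimensional case analysis directly in $d$ coordinates; the content is identical.
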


\begin{proof}[Sketch of proof]
	We take a very similar approach to proving this as in the proof of Lemma \ref{lemma:2Dunion}. Fix $k \in [d]$. Let $\mathcal{U}$ denote the operation that maps a set \linebreak $K \subseteq [n_1] \times [n_2] \times \ldots \times [n_d]$ to the set constructed by removing the $n_k$th and ($n_k-1$)th slices of $K$ along the $k$th coordinate, and adding their union as the ($n_k-1$)th slice along the $k$th coordinate. We may assume without loss of generality that $A^- = \mathcal{U}(A)$. We want to show the following. Apply the operation $\mathcal{U}$ to each element of a step-by-step percolation process of $A$. After removing the repeating elements, we obtain a step-by-step percolation process of $A^-$.
	
	First, we examine what happens when the vertex infected within a set in a step-by-step percolation process can be infected using an infecting set entirely contained by a slice along the $k$th coordinate. In this case, the same infecting set is present in the image of the set with respect to $\mathcal{U}$, or the vertex infected by it is already in the image, possibly by changing the $k$th coordinate to $n_k-1$ in both cases. If the vertex can not be infected with an infecting set entirely contained in a slice along the $k$th coordinate, observe the following. The infecting set is contained within a subset (a ``two-dimensional plane'') of $[n_1] \times [n_2] \times \ldots \times [n_d]$ in which only two coordinates take on more than one value, one of which is the $k$th coordinate. By omitting the rest of coordinates, we return to the two-dimensional case. Here, Lemma \ref{lemma:2Dunion} finishes the proof if the $k$th coordinate takes on the role of the first coordinate.
\end{proof}

\begin{definition}
	The \emph{edgesum} of the product $[n_1] \times [n_2] \times \ldots \times [n_d]$ is $\sum_{i=1}^d n_i$.
\end{definition}

\begin{proof}[Proof of Theorem \ref{theorem:gen_D}]
	In \cite{Balogh2012}, a percolating set of size $\sum_{i=1}^{d} n_i - (d-1)$ is presented in the hyperrectangle graph $[n_1] \times [n_2] \times \ldots \times [n_d]$, for $t=r=2$. (This is the set $L_{n_1,\ldots, n_d}^{d,2,2}$ defined in Definition \ref{def:L}.) It remains to show that if $A$ is a percolating set in the hyperrectangle graph $[n_1] \times [n_2] \times \ldots \times [n_d]$, then $|A| \geq \sum_{i=1}^{d} n_1 - (d-1)$.
	
	The proof is by induction on the edgesum of the vertex set of the hyperrectangle graph. In the base case, $n_i=1$ for all $i \in [d]$, thus the edgesum is $d$. In this case, a set percolates if and only if it has one element, therefore the proposition holds.
	
	Assume that the proposition holds if the edgesum is $N-1$. Now, let $A$ be a percolating set with edgesum $N \geq d+1$.
	If $A \neq [n_1] \times [n_2] \times \ldots \times [n_d]$ -- otherwise, the theorem holds, -- $A$ has to contain an infecting set $E$.
	Let $k$ be a coordinate in which two elements of the infecting set differ. Then, the slices along the $k$th coordinate which contain the four elements of $E$ have non-disjoint projections.

	According to Lemma \ref{lemma:gen_Dunion}, the set $A^-$  obtained from $A$ by replacing these two slices with their union still percolates. Since these $P$-slices are not disjoint, $|A| \geq |A^-| + 1$ holds. The induction hypothesis yields that the size of $A$ is at least $N-1-(d-1)+1=N-(d-1)$, and this finishes the proof.
\end{proof}

\section{Open Questions}

We conclude by posing a few open questions relating to the combinatorial investigation of bootstrap percolation.


In \cite{Balogh2012}, it is proved that $L_{n_1,\ldots, n_d}^{d,t,r}$ percolates, and that
$$|L_{n_1,\ldots, n_d}^{d,t,r}| = m((n_1,n_2, \ldots, n_d), d, t, r)$$
holds for any values of $d, t, r$ and $n_1,n_2,\ldots, n_d$. Therefore, not only does $L_{n_1,\ldots, n_d}^{d,t,r}$ percolate in the hyperrectangle graph with vertex set $[n_1] \times [n_2] \times \ldots \times [n_d]$ and parameters $d, t, r$ but it does so with the least possible number of initially infected vertices.

\begin{definition}
	Let $\mathcal{H}$ be a hyperrectangle graph with arbitrary $d, t,$ and $r$ parameters, and let $A$ be an arbitrary subset of $V(\mathcal{H})$. Let $E$ be a hyperedge of the hypergraph which is an infecting set of some vertex $v$. In other words, let $E \in E(\mathcal{H})$ such that $|E \cap A| = t^r-1$ and $E \setminus A = \{v\}$. Now, let $w \in E \setminus \{v\}$ arbitrary. The \emph{shift operation} on $A$ with respect to $E$ and $w$ maps $A$ to $A \cup \{v\} \setminus \{w\}$. If $w$ is the vertex of $E$ with the maximal coordinate sum, this operation is a \emph{maximal shift operation}. We say that a set $A$ can be \emph{transformed into $A^*$ with a sequence of (maximal) shift operations} if there is a sequence of sets of which $A$ is the first element, and every other element can be obtained by a (maximal) shift operation on the previous element, and this sequence contains $A^*$.
\end{definition}

\begin{definition}
	Let $\mathcal{H}$ be a hyperrectangle graph and let $A \subseteq V(\mathcal{H})$ be arbitrary. The infecting set $E \in E(\mathcal{H})$ is in \emph{standard position} if the vertex $v \in E \setminus A$ has the maximal coordinate sum among the vertices in $E$.
\end{definition}

Note that a maximal shift operation can not be performed on a hyperedge if and only if it is in standard position.
An example of a maximal shift operation with $d=r=2$ and $t=3$ can be seen on Figure \ref{fig:im_op}. The dark gray entries are the infected vertices which along with the light gray entry form an infecting set of the light gray entry.

\begin{figure}[htbp]
	\centering
	\includegraphics[width=0.7\linewidth]{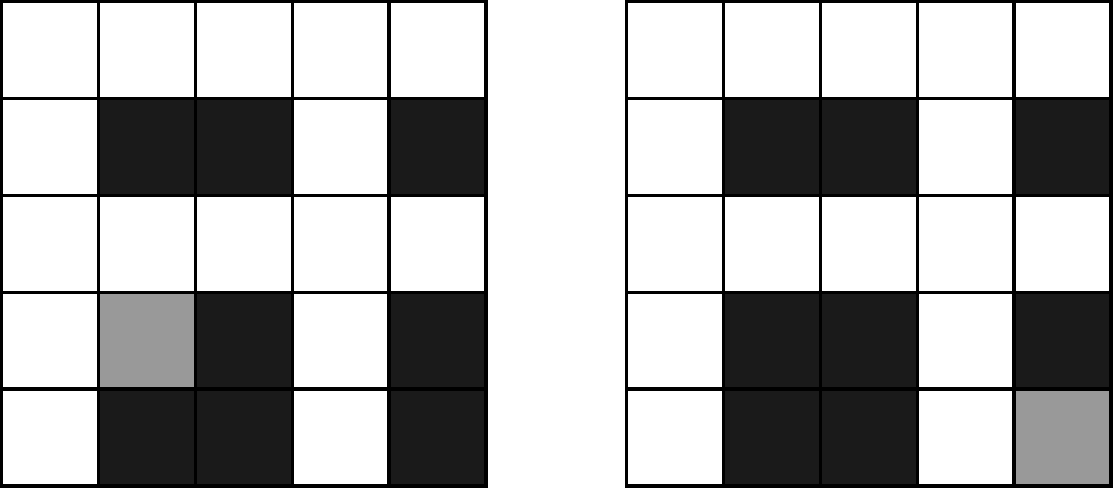}
	\caption{An infecting set (left) and the result of a maximal shift operation performed with respect to it (right)}
	\label{fig:im_op}
\end{figure}

Let $\mathcal{H}$ be a hyperrectangle graph, let $A \subseteq V(\mathcal{H})$ be arbitrary. Let $E \in E(\mathcal{H})$ be an arbitrary infecting set with $v \in E \setminus A$ and $w \in E \setminus \{v\}$. Let $A^*$ be the image of a shift operation on $A$ with respect to $E$ and $w$. Observe a step-by-step percolation process of $A$ and $A^*$. First, let $E$ infect $v$ in $A$, and let $E$ infect $w$ in $A^*$. Notice that we get the same set after this step. Therefore, $\overline{A} = \overline{A^*}$ holds. Consequently, transforming a percolating set using a sequence of shift operations yields a percolating set of the same size.

Since $L_{n_1,\ldots, n_d}^{d,t,r}$ is percolating set with the minimal possible number of infected vertices, we raise the following question.

\begin{question}\label{question:general}
	For what values of $d, t,$ and $r$ does it hold that any percolating set $A \subseteq [n_1] \times [n_2] \times \ldots \times [n_d]$ can be transformed into $A'$ using a sequence of shift operations such that $L_{n_1,\ldots, n_d}^{d,t,r} \subseteq A'$? Can this be done using a sequence of maximal shift operations only?
\end{question}

Note that verifying this for specific values of $d,t,$ and $r$ yields the tight lower bound for $m(\underline{n},d,t,r)$ with these parameters. We also raise the following question which relaxes the assumptions posed in the previous question.

\begin{question}
	Assume that $d=r$. For what values of $d$ and $t$ does there exist a sequence of shift operations for any percolating set $A \subseteq [n_1] \times [n_2] \times \ldots \times [n_d]$ which transforms $A$ into a set which percolates in one phase?
\end{question}

Note that an affirmitive answer in the case of $d=2$ would give a proof for the tight lower bound using Theorem \ref{theorem:gen_t} for any value of $t$. This was the main motivation behind Theorem \ref{theorem:gen_t}.
We finish this work by answering both questions in Open Question \ref{question:general} in the affirmative in the case of $d=t=r=2$.

\begin{proposition}
	Let $A \subseteq [n_1] \times [n_2]$ be a percolating set. Then, $A$ can be transformed using a sequence of maximal shift transformations into a set which has $n_2$ elements in its first row and $n_1$ elements in its first column.
\end{proposition}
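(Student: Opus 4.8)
The plan is to keep performing maximal shift operations until none is possible, and then argue that the resulting stable set already has the required shape. The discussion preceding the statement shows that every shift operation preserves both percolation and cardinality, so it suffices to (i) guarantee that a sequence of maximal shifts must terminate, and (ii) analyse the set at which it stops.

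For termination I would introduce the potential $\Phi(B)=\sum_{(x,y)\in B}(x+y)$. In a maximal shift the removed vertex $w$ is the unique corner of the infecting rectangle with maximal coordinate sum, and it is replaced by the newly infected vertex $v\neq w$; hence the coordinate sum of $w$ strictly exceeds that of $v$, and $\Phi$ drops by a positive integer at each step. Since $\Phi$ is a nonnegative integer, only finitely many maximal shifts can occur, and the process stops at a set $A'$ admitting no maximal shift. By the remark that a maximal shift is impossible on a hyperedge precisely when it is in standard position, every infecting set of $A'$ is in standard position, while $A'$ still percolates and $|A'|=|A|$.

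The heart of the argument is then to show that such a stable percolating $A'$ has its first row and first column full. Write $R_i\subseteq[n_2]$ for the $i$th $P$-row; note first that every $R_i$ is nonempty, since an empty row can never acquire a vertex (the rectangle rule needs two vertices already present in the row being infected) and $A'$ percolates. Standard position forbids, for each $b>1$ and each $c<d$, the two rectangles on rows $\{1,b\}$ and columns $\{c,d\}$ whose unique uninfected corner is $(1,c)$ (respectively $(1,d)$): in both the maximal-sum corner $(b,d)$ is present, so these are non-standard infecting sets. The first forbidden pattern says every element of $R_b\setminus R_1$ exceeds every element of $R_b\cap R_1$, and the second says exactly the opposite; hence one of $R_b\setminus R_1$ and $R_b\cap R_1$ must be empty, i.e. for every $b>1$ either $R_b\subseteq R_1$ or $R_b\cap R_1=\emptyset$.

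Now colour a column $P$ if it lies in $R_1$ and $Q$ otherwise. The dichotomy makes each row monochromatic: row $1$ and the rows with $R_b\subseteq R_1$ see only $P$-columns, while the rows with $R_b\cap R_1=\emptyset$ see only $Q$-columns. I claim this monochromaticity is an invariant of the percolation of $A'$: to infect $(b,j)$ one uses a rectangle whose other three corners $(b,j'),(a,j),(a,j')$ are already infected, so $j$ and $j'$ share row $a$'s colour while $j'$ carries row $b$'s colour, forcing $j$ to have row $b$'s colour too. Consequently row $1$ never acquires a $Q$-column. Since $\overline{A'}$ is the full grid, row $1$ must equal $[n_2]$, so there are no $Q$-columns and the first row is full. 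Transposing the two coordinates (the entire setup, including the notion of standard position, is symmetric in the coordinates) yields that the first column is full as well; this is exactly the assertion for $A'$, and hence for $A$.

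I expect the main obstacle to be the structural dichotomy for the $P$-rows of $A'$: one must carefully track which corner of a rectangle is the unique uninfected one and convert each forbidden non-standard pattern into an order relation between $R_b\setminus R_1$ and $R_b\cap R_1$. The colouring-invariant argument that follows is short, but it is essential to reason with the invariant rather than with $A'$ directly, because intermediate sets of the percolation need \emph{not} have all their infecting sets in standard position.
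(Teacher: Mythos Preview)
Your argument is correct and, like the paper, first runs maximal shifts to termination (via the same coordinate-sum potential) and then analyses the resulting set in which every infecting rectangle is in standard position. From that point on the two proofs diverge. The paper defines an equivalence relation on the $P$-rows of $A'$ (two rows are related iff their projections meet) and shows, using standard position, that related rows share the same minimum; this makes the relation transitive and forces $\overline{A'}$ to decompose as a disjoint union of combinatorial rectangles $C_i^{\mathrm{ind}}\times\hat r_i$. Percolation then collapses everything to a single block with $\hat r_1=[n_2]$ and common minimum $1$, giving both the full first row and the full first column in one stroke. You instead extract only a dichotomy relative to row~$1$ (each $R_b$ is either contained in or disjoint from $R_1$), colour the columns accordingly, and propagate row-monochromaticity as an invariant through the percolation of $A'$; the first-column statement then comes from transposing coordinates. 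Your route is a bit more hands-on but entirely sound; the paper's route is slightly more structural and yields, as a by-product, the general shape of $\overline{A'}$ for arbitrary (not necessarily percolating) $A'$, which is recorded in the remark following the proposition.
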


\begin{proof}
	Let $A$ be a percolating set. Perform a sequence of maximal shift operations of $A$ until it is no longer possible to do so. Since the sum of the coordinates in the set strictly decreases sequence after performing a maximal shift operation, this sequence is finite for any percolating set. Let $A^*$ denote the last element of this sequence.
	
	We now define a relation over the $P$-rows of $A^*$. Let $r_1$ and $r_2$ be $P$-rows of $A^*$, and let $r_1 \sim r_2$ if and only if they are non-disjoint. Since all $P$-rows of $A^*$ are non-empty, the relation $\sim$ inherits reflexivity and symmetry from set intersection. Over the $P$-rows of $A^*$, this relation is also transitive. Let $r_i, r_j,$ and $r_k$ be $i$th, $j$th, and $k$th $P$-rows of $A^*$ respectively, and let $r_i \sim r_j$ and $r_j \sim r_k$. Since all infecting sets are in standard position in $A^*$, $\min r_i = \min r_j = \min (r_i \cap r_j)$ holds. For analogous reasons, $\min r_j = \min r_k = \min (r_j \cap r_k)$ also holds. Therefore, $\min r_i = \min r_k$, thus $r_i$ and $r_k$ are not disjoint, proving that $\sim$ is indeed transitive over the $P$-rows of $A^*$.
	
	Let $C_1, C_2, \ldots, C_s$ denote the equivalence classes of the $P$-rows of $A^*$ with respect to $\sim$, and for $i = 1,2,\ldots, s$, let $\widehat{r}_i \subseteq [n_2]$ be the element of $C_i$ with the lowest row index. Notice that $\widehat{r}_i$ contains the union of all other $P$-rows in its equivalence class. Otherwise, there would exist an infecting set in $A^*$ not in standard position. The $P$-rows $\widehat{r}_1, \widehat{r}_2, \ldots, \widehat{r}_s$ are pairwise disjoint since they are in pairwise different equivalence classes. From the above, it follows easily that 
	\begin{equation}\label{teljes_alak_kar}\overline{A^*} = \bigcup_{i=1}^{s} (C_i^{\text{ind}} \times \widehat{r}_i)\end{equation}
	where $C_i^{\text{ind}} \subseteq [n_1]$ is the set of row indices of the $P$-rows in $C_i$.
	
	So the set $A^*$ percolates if and only if there is only one equivalence class with $C_1^{\text{ind}} = [n_1]$ and $\widehat{r}_1 = [n_2]$. Therefore, the first row of $A^*$ contains $n_2$ elements. Since all elements of this equivalence class share the same minimum and $\min \widehat{r}_1 = \min [n_2] = 1$, every other $P$-row must contain the element $1$. Therefore, the first column of $A^*$ contains $n_1$ elements. This completes the proof.
\end{proof}

\begin{remark}
The above proof shows that the full form of a set $A \subseteq [n_1] \times [n_2]$ always has the form
$$(I_1\times J_1)\cup (I_2\times J_2) \cup\dots\cup (I_s\times J_s)$$
for some nonnegative integer $s$ and pairwise disjoint sets $I_1,I_2,\ldots,I_s\subseteq [n_1]$ and pairwise disjoint sets $J_1,J_2,\ldots,J_s\subseteq [n_2]$, c.f.\ formula~(\ref{teljes_alak_kar}).
\end{remark}

\bibliographystyle{plain}
\bibliography{references} 

\end{document}